\documentclass{amsart}
\usepackage{graphicx,amscd,color,amsmath,amsfonts,amssymb,geometry}
\usepackage[initials]{amsrefs}

\newtheorem{theorem}{Theorem}[section]
\newtheorem{proposition}[theorem]{Proposition}

\numberwithin{equation}{section}

\geometry{left=2.5cm,right=2.5cm,top=3cm,bottom=3cm,headheight=3mm,paper=a4paper}

\begin{document}

\title[On the real polynomial Bohnenblust--Hille inequality]{On the real polynomial Bohnenblust--Hille inequality}

\author[Campos \and Jim\'{e}nez \and Mu\~{n}oz \and Pellegrino \and Seoane]{J. R. Campos \and P. Jim\'{e}nez-Rodr\'{\i}guez \and G.A. Mu\~{n}oz-Fern\'{a}ndez \and D. Pellegrino \and J.B. Seoane-Sep\'{u}lveda}

\address{Departamento de Ci\^{e}ncias Exatas, \newline\indent Universidade Federal da Para\'{\i}ba, \newline\indent 58.297-000 - Rio Tinto, Brazil.}
\email{jamilson@dce.ufpb.br and jamilsonrc@gmail.com}

\address{Departamento de An\'{a}lisis Matem\'{a}tico,\newline\indent Facultad de Ciencias Matem\'{a}ticas, \newline\indent Plaza de Ciencias 3, \newline\indent Universidad Complutense de Madrid,\newline\indent Madrid, 28040, Spain.}
\email{pablo$\_$jimenez@mat.ucm.es}

\address{Departamento de An\'{a}lisis Matem\'{a}tico,\newline\indent Facultad de Ciencias Matem\'{a}ticas, \newline\indent Plaza de Ciencias 3, \newline\indent Universidad Complutense de Madrid,\newline\indent Madrid, 28040, Spain.}
\email{gustavo$\_$fernandez@mat.ucm.es}

\address{Departamento de Matem\'{a}tica, \newline\indent Universidade Federal da Para\'{\i}ba,
\newline\indent 58.051-900 - Jo\~{a}o Pessoa, Brazil.} \email{pellegrino@pq.cnpq.br and dmpellegrino@gmail.com}

\address{Departamento de An\'{a}lisis Matem\'{a}tico,\newline\indent Facultad de Ciencias Matem\'{a}ticas, \newline\indent Plaza de Ciencias 3, \newline\indent Universidad Complutense de Madrid,\newline\indent Madrid, 28040, Spain.}
\email{jseoane@mat.ucm.es}

\thanks{D. Pellegrino was supported by CNPq Grant 477124/2012-7 and INCT-Matem\'{a}tica.}

\subjclass[2010]{46G25, 47L22, 47H60.}
\keywords{Bohnenblust--Hille inequality, Absolutely summing operators.}

\maketitle

\begin{abstract}
It was recently proved by Bayart et al. that the complex polynomial Bohnenblust--Hille inequality is subexponential. We show that, for real scalars, this does no longer hold. Moreover, we show that, if $D_{\mathbb{R},m}$ stands for the real Bohnenblust--Hille constant for $m$-homogeneous polynomials, then $\displaystyle\lim\sup_{m}D_{\mathbb{R},m}^{1/m}=2.$
\end{abstract}

\section{Introduction}

If $E$ is a Banach space, real or complex, we say that $P$ is a homogeneous
polynomial on $E$ of degree $m\in{\mathbb{N}}$ if there exists an $m$-linear
form $L$ on $E^{m}$ such that $P(x)=L(x,\ldots,x)$ for all $x\in E$. It is
customary to denote by $\widehat{L}$ the restriction of $L$ to the diagonal of
$E^{m}$. An old and widely known algebraic result establishes that for every
homogeneous polynomial $P$ of degree $m$ on $E$ there exists a unique
symmetric $m$-linear form $L$ on $E^{m}$, called the polar of $P$, such that
$P=\widehat{L}$. We denote by ${\mathcal{P}}(^{m}E)$, ${\mathcal{L}}(^{m}E)$
and ${\mathcal{L}}^{s}(^{m}E)$ the spaces of continuous $m$-homogeneous
polynomials, continuous $m$-linear forms and continuous symmetric $m$-linear
forms on $E$ respectively. It is well known that homogeneous polynomials or
$m$-multilinear forms are continuous on $E$ if and only if they are bounded,
respectively, over the unit ball ${\mathsf{B}}_{E}$ of $E$ or ${\mathsf{B}%
}_{E}^{m}$. In that case
\begin{align*}
\|P\|:  &  =\sup\{|P(x)|:x\in{\mathsf{B}}_{E}\},\\
\|L\|:  &  =\sup\{|L(x_{1},\ldots,x_{m})|:x_{1},\ldots x_{m}\in{\mathsf{B}%
}_{E}\},
\end{align*}
define a norm in ${\mathcal{P}}(^{m}E)$ and ${\mathcal{L}}(^{m}E)$
respectively. If $P\in{\mathcal{P}}(^{m}E)$, we shall refer to $\|P\|$ as the
polynomial norm of $P$ in $E$. This norm is very difficult to compute in most
cases, for which reason it would be interesting to obtain reasonably good
estimates on it. The $\ell_{p}$ norm of the coefficients of a given polynomial
on ${\mathbb{K}}^{n}$ (${\mathbb{K}}={\mathbb{R}}$ or ${\mathbb{C}}$) has also
been widely used in mathematics and is much easier to handle. Observe that an
$m$-homogeneous polynomial in ${\mathbb{K}}^{n}$ can be written as
\[
P(x)={\sum\limits_{\left\vert \alpha\right\vert =m}}a_{\alpha}x^{\alpha},
\]
where $x=(x_{1},\ldots,x_{n})\in{\mathbb{K}}^{n}$, $\alpha=(\alpha_{1}%
,\ldots,\alpha_{n})\in({\mathbb{N}}\cup\{0\})^{n}$, $|\alpha|=\alpha
_{1}+\cdots+\alpha_{n}$ and $x^{\alpha}=x_{1}^{\alpha_{1}}\cdots x_{n}%
^{\alpha_{n}}$. Thus we define the $\ell_{p}$ norm of $P$, with $p\geq1$, as
%\[
%|P|_{p}=\left( \sum_{|\alpha|=m}|a_{\alpha}|^{p}\right) ^{\frac}{1}{p}.
%\]
If $E$ has finite dimension $n$, then the polynomial norm $\|\cdot\|$ and the
$\ell_{p}$ norm $|\cdot|_{p}$ ($p\geq1$) are equivalent, and therefore there
exist constants $k(m,n), K(m,n)>0$ such that
\begin{equation}
\label{equ:equiv}k(m,n)|P|_{p}\leq\|P\|\leq K(m,n)|P|_{p},
\end{equation}
for all $P\in{\mathcal{P}}(^{m}E)$. The latter inequalities may provide a good
estimate on $\|P\|$ as long as we know the exact value of the best possible
constants $k(m,n)$ and $K(m,n)$ appearing in \eqref{equ:equiv}.

The problem presented above is an extension of the the well known polynomial
Bohnenblust-Hille inequality (polynomial BH inequality for short). It was
proved in \cite{bh} that there exists a constant $D_{m}\geq1$ such that for
every $P\in{\mathcal{P}}(^{m}\ell_{\infty}^{n})$ we have
\begin{equation}
|P|_{\frac{2m}{m+1}}\leq D_{m}\Vert P\Vert. \label{equ:BH}%
\end{equation}
Observe that \eqref{equ:BH} coincides with the first inequality in
\eqref{equ:equiv} for $p=\frac{2m}{m+1}$ except for the fact that $D_{m}$ in
\eqref{equ:BH} can be chosen in such a way that it is independent from the
dimension $n$. Actually Bohnenblust and Hille showed that $\frac{2m}{m+1}$ is
optimal in \eqref{equ:BH} in the sense that for $p<\frac{2m}{m+1}$, any
constant $D$ fitting in the inequality
\[
|P|_{p}\leq D\Vert P\Vert,
\]
for all $P\in{\mathcal{P}}(^{m}\ell_{\infty}^{n})$ depends necessarily on $n$.

The polynomial and multilinear Bohnenblust--Hille inequalities were overlooked
for a long period (see \cite{Boas}) and were only rediscovered in the last
few years; now these inequalities can be seen as an extension of the successful theory of
absolutely summing operators (see \cite{DJT}) and have fundamental importance in different
fields of Mathematics and Physics, such as Operator Theory, Fourier and
Harmonic Analysis, Complex Analysis, Analytic Number Theory and Quantum
Information Theory (see \cite{bps, Boas, boas, annals2011, DD, diniz2, monta, Nu3, psseo, Que} and references therein).

The best constants in \eqref{equ:BH} may depend on whether we consider the
real or the complex version of $\ell_{\infty}^{n}$, which motivates the
following definition
\[
D_{{\mathbb{K}},m}:=\inf\left\{  D>0:\text{$|P|_{\frac{2m}{m+1}}\leq D\Vert
P\Vert$, for all $n\in{\mathbb{N}}$ and $P\in{\mathcal{P}}(^{m}\ell_{\infty
}^{n})$}\right\}  .
\]
If we restrict attention to ${\mathcal{P}}(^{m}\ell_{\infty}^{n})$ for some
$n\in{\mathbb{N}}$, then we define
\[
D_{{\mathbb{K}},m}(n):=\inf\left\{  D>0:\text{$|P|_{\frac{2m}{m+1}}\leq D\Vert
P\Vert$ for all $P\in$}{\mathcal{P}}(^{m}\ell_{\infty}^{n})\right\}  .
\]
Note that $D_{{\mathbb{K}},m}(n)\leq D_{{\mathbb{K}},m}$ for all
$n\in{\mathbb{N}}$.

It was recently shown in \cite{bps} that the complex polynomial
Bohnenblust--Hille inequality is, at most, subexponential, i.e., for any
$\varepsilon>0$, there is a constant $C_{\varepsilon}>0$ such that
$D_{{\mathbb{C}},m}\leq C_{\varepsilon}\left(  1+\varepsilon\right)  ^{m}$ for
all positive integers $m$. The main motivation of this paper are the
following problems:

\begin{itemize}

\item[(I)] Is the \emph{real} polynomial BH inequality subexponential?

\item[(II)] What is the optimal growth of the real polynomial BH inequality?

\end{itemize}

\noindent We provide the final answer to these previous problems by showing that $$\lim\sup_{m}D_{\mathbb{R},m}^{1/m}=2.$$

\section{The upper estimate\label{section3}}

The proof of the subexponentiality of the complex BH inequality given in \cite{bps} lies heavily in arguments restricted to complex scalars (it uses, for instance estimates from \cite{BAYART} for complex scalars); so a simple adaptation for the real case does not work. The calculation of the upper estimate of the BH inequality is quite simplified by the use of complexifications of polynomials. In particular we are interested in the
following deep result due to Visser \cite{Visser}, which generalizes and old result of Chebyshev:

\begin{theorem}
[Visser, \cite{Visser}, 1946]\label{theoremabe} Let
\[
P(y_{1}, \ldots, y_{n}) = \sum_{|\alpha|\le m} a_{\alpha} y_{1}^{\alpha_{1}}
\cdots y_{n}^{\alpha_{n}},
\]
with $\alpha= (\alpha_{1}, \ldots, \alpha_{n})$, $|\alpha| = \alpha_{1}+
\cdots+ \alpha_{n}$, be a polynomial of total degree at most $m \in\mathbb{N}
$ in the variables $y_{1},\ldots, y_{n}$ and with real coefficients
$a_{\alpha}$. Suppose $0\leq k\leq m$ and $P_{k}$ is the homogeneous
polynomial of degree $k$ defined by
\[
P_{k}(y_{1}, \ldots, y_{n}) = \sum_{|\alpha|= k} a_{\alpha} y_{1}^{\alpha_{1}}
\cdots y_{n}^{\alpha_{n}}.
\]
Then we have
\[
\max_{z_{1},\ldots,z_{n}\in{\mathbb{D}}} |P_{m}(z_{1}, \ldots, z_{n}%
)|\le2^{m-1} \cdot\max_{x_{1}, \ldots, x_{n} \in[-1,1]} |P(x_{1}, \ldots,
x_{n})|,
\]
where ${\mathbb{D}}$ stands for the closed unit disk in ${\mathbb{C}}$. In
particular, if $P$ is homogeneous, then
\[
\max_{z_{1},\ldots,z_{n}\in{\mathbb{D}}} |P(z_{1}, \ldots, z_{n})|\le2^{m-1}
\cdot\max_{x_{1}, \ldots, x_{n} \in[-1,1]} |P(x_{1}, \ldots, x_{n})|.
\]
Moreover, the constant $2^{m-1}$ cannot be replaced by any smaller one.
\end{theorem}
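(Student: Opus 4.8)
The plan is to reduce everything to the one--dimensional Chebyshev inequality for leading coefficients. For $n=1$ one has $P_m(z)=a_mz^m$, so $\max_{z\in\mathbb D}|P_m(z)|=|a_m|$ and the assertion is precisely $|a_m|\le 2^{m-1}\max_{[-1,1]}|P|$; normalising the right--hand side to $1$ and assuming $|a_m|>2^{m-1}$, I would compare $P$ with the Chebyshev polynomial $T_m$ (which has leading coefficient $2^{m-1}$ and sup norm $1$ on $[-1,1]$): the polynomial $\tfrac{2^{m-1}}{a_m}P-T_m$ then has degree at most $m-1$ but is nonzero and alternates in sign at the $m+1$ nodes $\cos(j\pi/m)$, $0\le j\le m$ (there $T_m$ takes the values $(-1)^j$ while $\tfrac{2^{m-1}}{a_m}P$ stays below $1$ in modulus), hence has at least $m$ zeros --- a contradiction. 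This base case lifts immediately to \emph{real} arguments in several variables: for $v\in[-1,1]^n$ the one--variable polynomial $t\mapsto P(tv_1,\dots,tv_n)$ has degree $\le m$, is bounded by $M:=\max_{[-1,1]^n}|P|$ on $[-1,1]$, and has $t^m$--coefficient $P_m(v)$, so $|P_m(v)|\le 2^{m-1}M$ for every real $v\in[-1,1]^n$.

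The main obstacle, and where the genuine content of the theorem sits, is passing from real arguments to all $z\in\mathbb D^n$. This step cannot be deduced from the previous one by a soft argument, since already $P(z_1,z_2)=z_1^2-z_2^2$ has $\max_{[-1,1]^2}|P|=1$ but $\max_{\mathbb D^2}|P|=2$, so the complex estimate is strictly stronger than the cube estimate for $P_m$ and must be extracted from the hypothesis on $P$ itself. The approach I would take is to use the maximum modulus principle, one variable at a time, to reduce to $z$ on the torus $\mathbb T^n$, and then to exploit $e^{i\theta}=\cos\theta+i\sin\theta$ together with $\cos(k\theta)=T_k(\cos\theta)$ and $\sin(k\theta)=\sin\theta\,U_{k-1}(\cos\theta)$, where $U_{k-1}$ denotes the $(k-1)$-st Chebyshev polynomial of the second kind, in order to rewrite $P_m(e^{i\theta_1},\dots,e^{i\theta_n})$ as an explicit combination of the values of $P$ at the real points $(\cos\theta_1,\dots,\cos\theta_n)$. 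The delicate part is controlling the cancellations in this expansion precisely enough to recover the constant $2^{m-1}$ rather than something exponentially larger; I expect this to be the crux of the whole argument.

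Finally, the constant is sharp: taking $P=T_m$ in one variable gives $\max_{\mathbb D}|P_m|=\max_{\mathbb D}|2^{m-1}z^m|=2^{m-1}$ while $\max_{[-1,1]}|P|=1$, so no smaller constant is admissible in the general statement; for the homogeneous form of the inequality an extremal example necessarily involves more than one variable, the case $m=2$ being $P(z_1,z_2)=z_1^2-z_2^2$.
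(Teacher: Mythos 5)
First, a point of comparison: the paper does not prove this theorem at all --- it is quoted from Visser's 1946 note and used as a black box --- so the only question is whether your argument is complete on its own terms, and it is not. Your one-variable Chebyshev comparison is correct, and so is the reduction along real lines: for $v\in[-1,1]^n$ the polynomial $t\mapsto P(tv_1,\dots,tv_n)$ has $t^m$-coefficient $P_m(v)$, whence $|P_m(v)|\le 2^{m-1}\max_{[-1,1]^n}|P|$ for all \emph{real} $v\in[-1,1]^n$. But the inequality to be proved ranges over the complex polydisk, and, as you yourself observe with $z_1^2-z_2^2$, the passage from the real cube to $\overline{\mathbb{D}}^n$ is not a soft step: chaining your real-cube bound for $P_m$ with a generic complexification estimate would presuppose the very statement being proved (applied to the homogeneous polynomial $P_m$) and would in any case roughly square the constant. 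For this decisive step you offer only a programme --- maximum modulus to pass to the torus, then rewriting $P_m(e^{i\theta_1},\dots,e^{i\theta_n})$ through $\cos k\theta=T_k(\cos\theta)$ and $\sin k\theta=\sin\theta\,U_{k-1}(\cos\theta)$ --- and you explicitly concede that you cannot control the cancellations well enough to recover $2^{m-1}$. That concession is the whole theorem: the hypothesis bounds $P$ (not $P_m$) on the cube, the lower-order homogeneous parts interfere in any such expansion, and nothing in the proposal shows how to tame that interference. So there is a genuine gap, located exactly at the step you flagged as the crux.

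If you wish to complete the argument, a more promising line is to compare growth at infinity rather than boundary values on the torus: since $P_m(z)=\lim_{R\to\infty}R^{-m}P(Rz_1,\dots,Rz_n)$, it suffices to majorize $|P(w)|$ at complex points $w$ far from the cube by a Chebyshev-type extremal majorant for $[-1,1]^n$ whose leading behaviour is of the order $\max_{[-1,1]^n}|P|\cdot\tfrac12\bigl(2\max_j|w_j|\bigr)^m$; the factor $\tfrac12$, reflecting that $T_m$ has leading coefficient $2^{m-1}$ rather than $2^m$, is what produces the sharp constant. Establishing that multivariate comparison is the real work, and it is precisely what is missing from your proposal. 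Your sharpness remarks are fine for the inequality as stated: $P=T_m$ in one variable already shows that $2^{m-1}$ cannot be lowered.
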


Let $P:\ell_{\infty}^{n}\left(  \mathbb{R}\right)  \rightarrow\mathbb{R}$ be
an $m$-homogeneous polynomial
\[
P(x)={\textstyle\sum\limits_{\left\vert \alpha\right\vert =m}}a_{\alpha
}x^{\alpha}%
\]
and consider the complexification $P_{\mathbb{C}}:\ell_{\infty}^{n}\left(
\mathbb{C}\right)  \rightarrow\mathbb{C}$ of $P$ given by
\[
P_{\mathbb{C}}(z)={\sum\limits_{\left\vert \alpha\right\vert =m}}a_{\alpha
}z^{\alpha}.
\]

From Theorem \ref{theoremabe} above we know that
\begin{equation}
\left\Vert P_{\mathbb{C}}\right\Vert \leq2^{m-1}\left\Vert P\right\Vert .
\label{pm}%
\end{equation}
Thus, since the complex polynomial Bohnenblust--Hille inequality is
subexponential, for all $\varepsilon>0$ there exists $C_{\varepsilon}>1$ such
that%
\begin{equation}
|P|_{\frac{2m}{m+1}}=|P_{\mathbb{C}}|_{\frac{2m}{m+1}}\leq C_{\varepsilon
}\left(  1+\varepsilon\right)  ^{m}\left\Vert P_{\mathbb{C}}\right\Vert
\label{889}%
\end{equation}
and combining (\ref{pm}) and (\ref{889}) we conclude that%
\[
\lim\sup_{m}D_{\mathbb{R},m}^{1/m}\leq2.
\]

\bigskip\bigskip As we mentioned earlier, Bayart et al. proved, recently, that the complex polynomial Bohnenblust--Hille inequality is subexponential (see \cite{bps}). The following result shows that the exponential growth of the real polynomial BH inequality is sharp in a very strong way: the exponential bound can not be reduced in any sense, i.e., there is an exponential lower bound for $D_{\mathbb{R},m}$ which holds for every $m \in \mathbb{N}$.

\begin{theorem}
\label{the:1.177}%

\[
D_{\mathbb{R},m}>\left(  \frac{2\sqrt[4]{3}}{\sqrt{5}}\right)  ^{m}>\left(
1.17\right)  ^{m}%
\]
for all positive integers $m>1$.
\end{theorem}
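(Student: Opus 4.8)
The plan is to construct, for each $m>1$, an explicit $m$-homogeneous polynomial on $\ell_\infty^n(\mathbb{R})$ (for a suitable small $n$) whose ratio $|P|_{2m/(m+1)}/\|P\|$ is as close as possible to $(2\sqrt[4]{3}/\sqrt5)^m$, and then to argue that this ratio is in fact \emph{strictly} larger than that bound. The natural building block is a polynomial in a bounded number of variables realizing the extremal behaviour in low degree, and then to take tensor-type (Schur) powers: if $P$ is $m_0$-homogeneous on $\ell_\infty^{n_0}$ with a good ratio, then the polynomial $Q(x^{(1)},\dots,x^{(k)}) = \prod_{j=1}^k P(x^{(j)})$ on $\ell_\infty^{kn_0}$ is $km_0$-homogeneous; its sup-norm multiplies, $\|Q\| = \|P\|^k$, and its coefficient vector is the $k$-fold tensor product of that of $P$, so $|Q|_p = |P|_p^k$ for every $p$. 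Hence the quantity $\big(|P|_{2m_0/(m_0+1)}/\|P\|\big)^{1/m_0}$ survives under such powers, but the exponent $2m/(m+1)$ changes with $m$; one must be careful that $2km_0/(km_0+1)\to 2$, so the $\ell_p$-norm only \emph{increases} along the tower and the estimate is in the right direction.

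The concrete first step is to identify the right low-degree polynomial. Since the claimed base constant $2\sqrt[4]{3}/\sqrt5$ has a $\sqrt3$ and a $\sqrt5$ in it, the natural guess is a $2$-homogeneous polynomial in $2$ variables such as $P(x,y) = ax^2 + bxy + cy^2$ with $\|P\|_{[-1,1]^2}$ computed exactly, and coefficients tuned (e.g.\ $P(x,y)=x^2 - y^2 + \text{(something)}\,xy$, or the Aron--Globevnik / Kahane-type example) so that $|P|_{4/3} = (|a|^{4/3}+|b|^{4/3}+|c|^{4/3})^{3/4}$ is maximized relative to the sup-norm. I would compute $\|P\|$ on $[-1,1]^2$ by elementary calculus (the max is attained on the boundary, reducing to a one-variable cubic), optimize over the coefficients, and check that the resulting ratio raised to the power $1/2$ exceeds $2\sqrt[4]{3}/\sqrt5$. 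Then I would set $P_m := P^{\otimes (m/2)}$ for even $m$ (and handle odd $m$ by multiplying in one extra linear factor, say an independent variable $z$, which costs nothing in sup-norm and only helps in $\ell_p$-norm since appending a coordinate equal to $1$ to the coefficient vector and noting $t^{2m/(m+1)}$ comparisons — this needs a short monotonicity lemma), obtaining $|P_m|_{2m/(m+1)} \ge |P_m|_2 $-type lower bounds that are not quite what we want; rather I would keep the exact $\ell_{2m/(m+1)}$ computation, using that $2m/(m+1)\le 4/3$ for $m\ge 2$ so $|P_m|_{2m/(m+1)}\ge |P_m|_{4/3} = |P|_{4/3}^{m/2}$, and divide by $\|P_m\| = \|P\|^{m/2}$.

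The main obstacle, and the place where care is genuinely needed, is twofold. First, the inequality must be \emph{strict} for \emph{every} $m>1$, not merely asymptotic; this is why one cannot simply quote $\limsup$, and why the base example has to be chosen so that already its own ratio (to the power $1/2$) strictly beats $2\sqrt[4]{3}/\sqrt5$ with room to spare, and then the monotonicity of $p\mapsto |v|_p$ (decreasing in $p$, so smaller $p$ gives larger norm, and $2m/(m+1)$ is increasing to $2$) must be deployed in the correct direction — namely comparing the actual exponent $2m/(m+1)$ against the fixed exponent $4/3$ used to evaluate the base block, so that $|P_m|_{2m/(m+1)} \ge |P_m|_{4/3}$. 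Second, one must verify that $\|P^{\otimes k}\| = \|P\|^k$ exactly (the sup over the product of cubes factorizes because the variables are disjoint), which is clean, and that the odd-degree case is handled without loss; the cleanest route there is to note $D_{\mathbb{R},m}(n)$ is nondecreasing in $n$ and to exhibit $P\cdot P\cdots P\cdot \ell$ where $\ell(w)=w$ is one fresh variable, checking that appending a coordinate of value $1$ to a coefficient vector strictly increases the $\ell_{2m/(m+1)}$-norm while leaving $\|\cdot\|$ unchanged, so the per-degree ratio does not decay. Once the base constant is pinned down by the explicit two-variable optimization, the rest is bookkeeping with these two monotonicity facts.
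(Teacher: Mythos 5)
Your overall skeleton (a two-variable $2$-homogeneous block, tensor powers in disjoint variables, $\|P^{\otimes k}\|=\|P\|^k$ and multiplicativity of coefficient norms) is exactly the paper's even-degree strategy: the paper takes $R_2(x,y)=x^2-y^2+xy$ with $\|R_2\|=5/4$ and $R_m=R_2^{\otimes m/2}$. But the step on which your whole estimate rests is false: you claim ``$2m/(m+1)\le 4/3$ for $m\ge 2$, so $|P_m|_{2m/(m+1)}\ge |P_m|_{4/3}$.'' In fact $2m/(m+1)$ is \emph{increasing} in $m$, equals $4/3$ at $m=2$ and tends to $2$, and since $p\mapsto |v|_p$ is nonincreasing this gives $|P_m|_{2m/(m+1)}\le |P_m|_{4/3}$ --- the opposite of what you need; likewise your remark that ``the $\ell_p$-norm only increases along the tower'' is backwards. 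The only fixed exponent you may legitimately compare with from below is $p=2$ (because $2m/(m+1)<2$ for all $m$), and then the base ratio you must beat is $\left(|P|_2/\|P\|\right)^{1/2}$; for the natural block $x^2-y^2+xy$ this equals exactly $2\sqrt[4]{3}/\sqrt{5}$, so the $\ell_2$ comparison alone yields only ``$\ge$'', not the strict ``$>$'' demanded for every $m$. The paper avoids this trap by never comparing with a fixed exponent at all: since all $3^{m/2}$ coefficients of $R_m$ are $\pm1$, one computes $|R_m|_{2m/(m+1)}=3^{(m+1)/4}=3^{1/4}\cdot 3^{m/4}$ exactly, and the surplus factor $3^{1/4}>1$ is precisely what makes the inequality strict.

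Your odd-degree patch also needs repair. Multiplying by a fresh linear variable $w$ does not ``append a coordinate equal to $1$'' to the coefficient vector --- it leaves the coefficient multiset unchanged --- and, more seriously, if you run your plan with the $\ell_2$-optimal block $x^2-y^2$ (which does strictly beat the target, $(\sqrt{2})^{1/2}=2^{1/4}>2\sqrt[4]{3}/\sqrt{5}$), the resulting bound for odd degrees fails for small $m$: for $m=3$ the polynomial $w(x^2-y^2)$ gives $|{\cdot}|_{3/2}/\|{\cdot}\|=2^{2/3}\approx 1.587$, which is below $(2\sqrt[4]{3}/\sqrt{5})^3\approx 1.631$. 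The paper instead handles odd $m$ with a different construction, gluing two disjoint copies of $R_{m-1}$ via the factors $x_{2m}+x_{2m-1}$ and $x_{2m}-x_{2m-1}$ and estimating the norm by $2(5/4)^{(m-1)/2}$. (Alternatively, $w\cdot R_{m-1}$ with the block $x^2-y^2+xy$ and the exact $\ell_{2m/(m+1)}$ computation does work for all odd $m$, but this again requires the exact computation, not your monotonicity step.) So as written the proposal has a genuine gap at its core inequality, and the strictness and odd-degree claims do not go through without the exact-coefficient computation that the paper uses.
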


\begin{proof}
Let $m$ be an even integer. Consider the $m$-homogeneous polynomial
\[
R_{m}(x_{1},\ldots,x_{m})=\left(  x_{1}^{2}-x_{2}^{2}+x_{1}x_{2}\right)
\left(  x_{3}^{2}-x_{4}^{2}+x_{3}x_{4}\right)  \cdots\left(  x_{m-1}^{2}%
-x_{m}^{2}+x_{m-1}x_{m}\right)  .
\]
Since $\left\Vert R_{2}\right\Vert =5/4,$ it is simple to see that
\[
\left\Vert R_{m}\right\Vert =\left(  5/4\right)  ^{m/2}.
\]
From the BH inequality for $R_{m}$ we have
\[
\left(  {\sum\limits_{\left\vert \alpha\right\vert =m}}\left\vert a_{\alpha
}\right\vert ^{\frac{2m}{m+1}}\right)  ^{\frac{m+1}{2m}}\leq D_{\mathbb{R}%
,m}\left\Vert R_{m}\right\Vert ,
\]
that is,
\[
D_{\mathbb{R},m}\geq\frac{\left(  3^{\frac{m}{2}}\right)  ^{\frac{m+1}{2m}}%
}{\left(  \frac{5}{4}\right)  ^{\frac{m}{2}}}\geq\frac{\left(  \sqrt
{3}\right)  ^{\frac{m+1}{2}}}{\left(  \frac{5}{4}\right)  ^{\frac{m}{2}}%
}>\left(  \frac{2\sqrt[4]{3}}{\sqrt{5}}\right)  ^{m}.
\]
Now let us suppose that $m$ is odd. Keeping the previous notation, consider
the $m$ homogeneous polynomial%
\[
R_{m}\left(  x_{1},...,x_{2m}\right)  =\left(  x_{2m}+x_{2m-1}\right)
R_{m-1}\left(  x_{1},...,x_{m-1}\right)  +\left(  x_{2m}-x_{2m-1}\right)
R_{m-1}\left(  x_{m},...,x_{2m-2}\right)  .
\]
So we have%
\[
D_{\mathbb{R},m}\geq\frac{\left(  4\cdot3^{\frac{m-1}{2}}\right)  ^{\frac
{m+1}{2m}}}{2\cdot\left(  \frac{5}{4}\right)  ^{\frac{m-1}{2}}}>2^{m-1+\frac
{1}{m}}\left(  \frac{\sqrt[4]{3}}{\sqrt{5}}\right)  ^{m-1}>\left(
\frac{2\sqrt[4]{3}}{\sqrt{5}}\right)  ^{m-1}.
\]

\end{proof}

\bigskip

\section{The lower estimate\label{ii1}}

Using our previous results, in order to show that $\lim\sup_{m}D_{\mathbb{R}%
,m}^{1/m}=2$, we just need the following theorem:
%%%%%%%%%%%%%%%%%%%

\begin{theorem}
\label{the:Q_2k} If $k\in{\mathbb{N}}$ is fixed, then
\[
\lim\sup_{m}D_{\mathbb{R},m}^{1/m}(2^{k})\geq2^{1-2^{-k}}.
\]
Therefore,%
\[
\lim\sup_{m}D_{\mathbb{R},m}^{1/m}\geq2.
\]

\end{theorem}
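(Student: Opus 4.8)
The plan is to construct, for each fixed $k$, a sequence of $m$-homogeneous polynomials on $\ell_\infty^{2^k}(\mathbb{R})$ whose Bohnenblust--Hille ratio grows like $2^{(1-2^{-k})m}$. The natural building block is a fixed polynomial $Q$ on $\ell_\infty^{2^k}$ that already separates the $\ell_{2m/(m+1)}$-norm of its coefficients from its supremum norm as efficiently as possible in dimension $2^k$; then one takes tensor-type powers of $Q$ to push the degree up while controlling both quantities multiplicatively. Specifically, I would look for a $2$-homogeneous (or low-degree) polynomial in $2^k$ variables, built recursively from the $2^{k-1}$-variable case, so that the constant improves from $2^{1-2^{-(k-1)}}$ to $2^{1-2^{-k}}$ as $k$ increases; this mirrors the doubling construction already used in the odd case of Theorem~\ref{the:1.177}, where pairing two copies of $R_{m-1}$ via an extra pair of variables $x_{2m}\pm x_{2m-1}$ yields a polynomial whose coefficient $\ell_1$-mass doubles while the norm only doubles.

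First I would fix the base case $k=0$: on $\ell_\infty^1$ the monomial $x_1^m$ gives $|P|_{2m/(m+1)}=1=\|P\|$, consistent with $2^{1-2^0}=1$. For the inductive step, given a polynomial $Q_{k-1}$ on $\ell_\infty^{2^{k-1}}$ realizing the ratio $2^{1-2^{-(k-1)}}$ in the limit, I would form
\[
Q_k(x_1,\ldots,x_{2^k}) = (x_{2^{k-1}+1}+\cdots)\,Q_{k-1}(x_1,\ldots,x_{2^{k-1}}) \pm (\cdots)\,Q_{k-1}(\text{shifted variables}),
\]
choosing the coefficients of the linear "switch" factor so that, by a parallelogram-type identity, $\|Q_k\| = \sqrt{2}\,\|Q_{k-1}\|$ while the number and size of monomials doubles; raising $Q_k$ to a large power $r$ (so $m = r\cdot\deg Q_k$) and applying the BH inequality to $Q_k^r$ then yields $D_{\mathbb{R},m}(2^k) \gtrsim (\text{ratio for }Q_k)^{r}$, and taking $r\to\infty$ gives the stated $\limsup$. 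The second assertion of the theorem, $\limsup_m D_{\mathbb{R},m}^{1/m}\ge 2$, then follows immediately by letting $k\to\infty$ in $2^{1-2^{-k}}\to 2$, since $D_{\mathbb{R},m}\ge D_{\mathbb{R},m}(2^k)$.

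The main obstacle is getting the supremum norm of the building block $Q_k$ to grow by exactly the factor $\sqrt{2}$ (and not more) at each recursion step while the relevant coefficient norm grows by the full factor $2$; this requires the switch-variable trick to be genuinely orthogonal in the right sense, so that evaluating at a point of the polydisk cannot simultaneously make both $Q_{k-1}$-blocks large. Establishing $\|Q_k\|=\sqrt2\,\|Q_{k-1}\|$ — rather than a weaker bound like $\|Q_k\|\le 2\|Q_{k-1}\|$, which would kill the gain — is where the real work lies, and it is presumably done by an explicit maximization argument using that the two shifted copies of $Q_{k-1}$ depend on disjoint variable blocks, so the maximum of $|a Q_{k-1}^{(1)} + b Q_{k-1}^{(2)}|$ over the polydisk is $(|a|+|b|)\|Q_{k-1}\|$ only when the linear factor is a single coordinate, whereas choosing it to be a suitably normalized sum forces the $\sqrt2$ scaling. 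Once that norm identity is in hand, the rest is a routine computation of coefficient sums and an application of the already-established BH inequality to powers.
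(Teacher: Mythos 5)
Your overall strategy (a fixed building block in $2^k$ variables, raised to high powers, then $k\to\infty$) is in the right spirit, but the central step of your construction does not work. If $A$ and $B$ are copies of $Q_{k-1}$ in \emph{disjoint} blocks of real variables and $\ell_1,\ell_2$ are linear forms in fresh ``switch'' variables $u,v$, then precisely because the blocks are disjoint the values of $A$ and $B$ range independently over $[-\|Q_{k-1}\|,\|Q_{k-1}\|]$, so
\[
\sup_{|u|,|v|\le 1}\ \sup_{x,y}\ \bigl|\ell_1(u,v)A(x)+\ell_2(u,v)B(y)\bigr|
=\|Q_{k-1}\|\cdot\sup_{|u|,|v|\le 1}\bigl(|\ell_1(u,v)|+|\ell_2(u,v)|\bigr).
\]
There is no ``orthogonality'' preventing both blocks from being large simultaneously. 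Consequently you cannot have the norm grow by $\sqrt{2}$ while the monomials double in number at unchanged size: if the switch coefficients are of modulus $1$ (as in $u\pm v$) the norm doubles, and if you normalize them to force $\sqrt2$ growth you shrink every coefficient by the same factor, so the Bohnenblust--Hille ratio, being scale invariant, gains nothing. Your appeal to a parallelogram identity is an $L^2$ phenomenon that the sup norm on the real cube does not obey, and mirroring the odd-degree trick of Theorem~\ref{the:1.177} cannot reach the rate $2^{1-2^{-k}}$ (that trick only yields $(1.17)^m$). A second gap is the phrase ``tensor-type powers'': $Q_k^r$ is a power in the \emph{same} variables, not a tensor power, so coefficient $\ell_p$ norms are not multiplicative, and bounding $|Q_k^r|_{2m/(m+1)}$ from below is exactly the nontrivial part you leave as ``routine''.

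The paper's proof resolves both issues with a different recursion: $Q_2(x_1,x_2)=x_1^2-x_2^2$ and $Q_{2^k}=Q_{2^{k-1}}(\text{first block})^2-Q_{2^{k-1}}(\text{second block})^2$. The key real-scalar point is that squares of norm-one real polynomials lie in $[0,1]$, so the difference has sup norm exactly $1$ --- the norm does not grow at all under doubling, which is what your linear switch cannot achieve. The coefficient side is then handled not through $\ell_1$ or $\ell_2$ masses of the block but through the single largest coefficient: by induction on $k$, using that maximal coefficients multiply for products in disjoint variables and that $\max_k\binom{n}{k}\ge 2^n/(n+1)$ with $(2k+1)(2n-2k+1)\le(n+1)^2$, one gets $|Q_{2^k}^n|_\infty\ge\bigl(2^n/(n+1)\bigr)^{2^k-1}$. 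Combining with $|P|_{\frac{2M}{M+1}}\ge|P|_\infty$ and $\|Q_{2^k}^n\|=1$ gives $D_{\mathbb{R},n2^k}(2^k)\ge\bigl(2^n/(n+1)\bigr)^{2^k-1}$, and taking $(n2^k)$-th roots and letting $n\to\infty$ yields $2^{(2^k-1)/2^k}=2^{1-2^{-k}}$; the final claim follows since $D_{\mathbb{R},m}\ge D_{\mathbb{R},m}(2^k)$ and $2^{1-2^{-k}}\to 2$. To repair your argument you would need to replace the linear switch by a mechanism with genuinely subadditive norm growth, which is exactly what the difference-of-squares construction provides.
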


\begin{proof}
Consider the sequence of polynomials (with norm $1$) defined recursively by
\begin{align*}
Q_{2}(x_{1},x_{2})  &  =x_{1}^{2}-x_{2}^{2},\\
Q_{2^{m}}(x_{1},\ldots,x_{2^{k}})  &  =Q_{2^{m-1}}(x_{1},\ldots,x_{2^{m-1}%
})^{2}-Q_{2^{m-1}}(x_{2^{m-1}+1},\ldots,x_{2^{m}})^{2}.
\end{align*}
Let us show (by induction on $m$) that
\begin{equation}
\label{eq_m}|Q_{2^{m}}^{n}|_{\infty} \ge\left(  \frac{2^{n}}{n+1} \right)
^{2^{m}-1}%
\end{equation}
for every natural number $n$. The case $m=1$ comes from the fact that, since
\[
\displaystyle 2^{n} = \sum_{k=0}^{n} \binom{n}{k} \le(n+1) \text{max}_{0\le k
\le n} \binom{n}{k},
\]
the $2n$-homogeneous polynomial $Q_{2}^{n}$ admits the following estimate:
\begin{equation}
\label{eq_uno}|Q_{2}^{n}|_{\frac{4n}{2n+1}} \ge|Q_{2}^{n}|_{\infty} =
\text{max}_{0\le k \le n} \binom{n}{k} \ge\frac{2^{n}}{n+1}.
\end{equation}
Let us now suppose that equation \eqref{eq_m} holds for some $m$, and notice
that
\begin{equation}
\label{hip_ind}Q_{2^{m}+1}^{n}(x_{1}, x_{2}, \ldots, x_{2^{m+1}}) = \sum
_{k=0}^{n} \binom{n}{k} (-1)^{n-k} Q_{2^{m}}^{2k} (x_{1}, \ldots, x_{2^{m}})
Q_{2^{m}}^{2(n-k)} (x_{2^{m} + 1}, \ldots, x_{2^{m+1}}).
\end{equation}
The coefficient of maximal absolute value in a product of polynomials in
disjoint sets of variables is the product of the respective maximal
coefficients, thus
\[
|Q_{2^{m}+1}^{n}|_{\infty} = \text{max}_{0 \le k \le n} \binom{n}{k}
|Q_{2^{m}}^{2k}|_{\infty} |Q_{2^{m}}^{2(n-k)}|_{\infty} \ge\text{max}_{0 \le k
\le n} \binom{n}{k} \left(  \frac{2^{2n}}{(2k+1)(2n-2k+1)}\right)  ^{2^{m} -
1}
\]
by the induction hypothesis. However, $(2k+1)(2n-2k+1) \le(n+1)^{2}$ when $0
\le k \le n$; thus
\[
|Q_{2^{m+1}}^{n}|_{\infty} \ge\left(  \frac{2^{n}}{n+1}\right)  ^{2^{m+1}-2}
\text{max}_{0 \le k \le n} \binom{n}{k} \ge\left(  \frac{2^{n}}{n+1}\right)
^{2^{m+1}-1},
\]
by equation \eqref{eq_uno}. Therefore, the formula given in \eqref{eq_m} holds
for every positive integer $m$.

\noindent Next, every $n$-homogeneous polynomial $P$ admits the clear estimate
given by
\[
|P|_{\frac{2n}{n+1}}\geq|P|_{\infty},
\]
from which equation \eqref{eq_m} yields that
\[
D_{{\mathbb{R}},n2^{m}}(2^{m})\geq\left(  \frac{2^{n}}{n+1}\right)  ^{2^{m}%
-1},
\]
and the proof follows straightforwardly.
\end{proof}

\section{Contractivity in finite dimensions: complex versus real scalars}\label{section5}

We remark that the complex polynomial Bohnenblust-Hille constants for
polynomials on ${\mathbb{C}}^{n}$, with $n\in{\mathbb{N}}$ fixed, are contractive.

\begin{proposition}
\label{o9} For all $n\geq2$ the complex polynomial BH inequality is
contractive in $\mathcal{P}(^{m}\ell_{\infty}^{n}).$ More precisely, for all
fixed $n\in{\mathbb{N}}$, there are constants $D_{m},$ with $\lim
_{m\rightarrow\infty}D_{m}=1$, so that
\[
|P|_{\frac{2m}{m+1}}\leq D_{m}\Vert P\Vert
\]
for all $P\in\mathcal{P}(^{m}\ell_{\infty}^{n}).$
\end{proposition}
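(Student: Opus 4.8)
The plan is to exploit the fact that, with the number of variables $n$ held fixed, an $m$-homogeneous polynomial on $\ell_\infty^n$ has only $\binom{m+n-1}{n-1}$ monomials, a quantity that grows only polynomially in $m$; hence the discrepancy between the exponent $\tfrac{2m}{m+1}$ and $2$ can be absorbed into a counting factor whose $m$-th root tends to $1$. So the whole statement will reduce to a clean $\ell_2$ estimate followed by a finite-dimensional H\"older inclusion.

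First I would establish the elementary bound $|P|_2\le\|P\|$ for $P(z)=\sum_{|\alpha|=m}a_\alpha z^\alpha\in\mathcal{P}(^m\ell_\infty^n(\mathbb{C}))$. The monomials $z^\alpha$ are orthonormal in $L^2(\mathbb{T}^n)$ with respect to normalized Haar measure, so Parseval's identity gives $\sum_{|\alpha|=m}|a_\alpha|^2=\int_{\mathbb{T}^n}|P|^2\,dm$; since $\mathbb{T}^n$ is the distinguished boundary of the polydisk $\mathsf{B}_{\ell_\infty^n}=\overline{\mathbb{D}}^n$, the maximum modulus principle yields $\int_{\mathbb{T}^n}|P|^2\,dm\le\|P\|^2$. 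Next I would pass from $\ell_2$ to $\ell_{2m/(m+1)}$ via the H\"older inclusion on $\mathbb{C}^N$: for $q\le2$ one has $|a|_q\le N^{1/q-1/2}|a|_2$. With $N=\binom{m+n-1}{n-1}$ and $q=\tfrac{2m}{m+1}$, so that $1/q-1/2=\tfrac1{2m}$, the two steps combine to
\[
|P|_{\frac{2m}{m+1}}\le\binom{m+n-1}{n-1}^{\frac{1}{2m}}\,\|P\|.
\]
Setting $D_m:=\binom{m+n-1}{n-1}^{1/(2m)}$ and using $\binom{m+n-1}{n-1}\le(m+n-1)^{n-1}$, we get $\log D_m\le\tfrac{n-1}{2m}\log(m+n-1)\to0$ as $m\to\infty$ (with $n$ fixed), so $D_m\to1$ and the proof follows.

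I do not expect a genuine obstacle; the only points requiring care are using the correct $L^2(\mathbb{T}^n)$-Parseval normalization in the $\ell_2$ bound, and the observation that for fixed $n$ the dimension of $\mathcal{P}(^m\ell_\infty^n)$ is polynomial in $m$, which is exactly what makes the counting factor subexponential. It is worth noting that the argument is essentially finite-dimensional and relies on a Hilbertian feature of complex scalars: the analogous \emph{dimension-free} statement would require $|P|_2\le C\|P\|$ with $C$ independent of $n$, and the failure of such a phenomenon in the real setting is precisely what Theorems \ref{the:1.177} and \ref{the:Q_2k} exhibit, which is the dichotomy this section means to highlight.
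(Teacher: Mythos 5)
Your proof is correct and is essentially identical to the paper's argument: both establish $|P|_2\le\|P\|$ via Parseval on the torus together with the maximum modulus principle, and then apply the finite-dimensional inclusion $|\cdot|_{2m/(m+1)}\le d^{1/(2m)}|\cdot|_2$ with $d=\binom{m+n-1}{n-1}$, yielding the same constant $D_m=\binom{m+n-1}{n-1}^{1/(2m)}\to1$. No gaps to report.
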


\begin{proof}
Let $P(z)=\sum_{|\alpha|=m}c_{\alpha}z^{\alpha}$ and $f(t)=P(e^{it_{1}}%
,\ldots,e^{it_{n}})=\sum_{|\alpha|=m}c_{\alpha}e^{i\alpha t}$, where
$t=(t_{1},\ldots,t_{n})\in{\mathbb{R}}^{n}$ $\alpha\in({\mathbb{N}}%
\cup\{0\})^{n}$ and $\alpha t=\alpha_{1}t_{1}+\cdots+\alpha_{n}t_{n}$. .
Observe that if $\Vert f\Vert$ denotes the sup norm of $f$ on $[-\pi,\pi]$, by
the Maximum Modulus Principle $\Vert f\Vert=\Vert P\Vert$. Also, due to the
orthogonality of the system $\{e^{iks}:k\in{\mathbb{Z}}\}$ in $L^{2}([-\pi
,\pi])$ we have
\[
\Vert P\Vert^{2}=\Vert f\Vert^{2}\geq\frac{1}{2\pi}\int_{-\pi}^{\pi}%
|f(t)|^{2}dt=\sum_{|\alpha|=m}|c_{\alpha}|^{2}=|P|_{2}^{2},
\]
from which $|P|_{2}\leq\Vert P\Vert$. On the other hand it is well known that
in ${\mathbb{K}}^{d}$ we have
\begin{equation}
|\cdot|_{q}\leq|\cdot|_{p}\leq d^{\frac{1}{p}-\frac{1}{q}}|\cdot|_{q},
\label{equ:lpnorms}%
\end{equation}
for all $1\leq p\leq q$. Since the dimension of $\mathcal{P}(^{m}\ell_{\infty
}^{n})$ is ${\binom{{m+n-1}}{{n-1}}}$, the result follows from $|P|_{2}%
\leq\Vert P\Vert$ by setting in \eqref{equ:lpnorms} $p=\frac{2m}{m+1}$, $q=2$
and $d={\binom{{m+n-1}}{{n-1}}}$. So $D_{m}={{\binom{{m+n-1}}{{n-1}}}}%
^{\frac{1}{2m}}$ and since%
\[
\lim_{m\rightarrow\infty}{{\binom{{m+n-1}}{{n-1}}}}^{\frac{1}{2m}}=1 ,
\]
the proof is done.
\end{proof}

The next result shows that the \textit{real version} of Proposition \ref{o9} is
not valid; we stress that Theorem \ref{the:1.177} cannot be used here since it
uses polynomials in a growing number of variables.

\begin{theorem}
\label{o910} For all fixed positive integer $N\geq2$, the exponentiality
of the real polynomial Bohnenblust-Hille inequality in $\mathcal{P}(^{m}%
\ell_{\infty}^{N})$ cannot be improved. More precisely,
\[
\lim\sup_{m}D_{\mathbb{R},m}^{1/m}(N)\geq\sqrt[8]{27}\approx1.5098
\]
for all $N\geq2$.
\end{theorem}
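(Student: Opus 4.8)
\textbf{Proof proposal for Theorem \ref{o910}.}

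The plan is to mimic the construction in Theorem \ref{the:Q_2k}, but instead of building polynomials in a growing number of variables via the recursion $Q_{2^m}$, I would fix the number of variables at $N=2$ and use a single quadratic polynomial $Q_2(x_1,x_2)=x_1^2-x_2^2$ (which has polynomial norm $1$ on $\ell_\infty^2(\mathbb{R})$) raised to a large power $n$. The key observation, already exploited in the previous proof, is that $|Q_2^n|_\infty = \max_{0\le k\le n}\binom{n}{k}\ge \frac{2^n}{n+1}$, while $\|Q_2^n\| = \|Q_2\|^n = 1$. Since $Q_2^n$ is a $2n$-homogeneous polynomial in $N=2$ variables, the real Bohnenblust--Hille inequality gives
\[
\left(\frac{2^n}{n+1}\right)^{\frac{2n+1}{4n}} \le |Q_2^n|_{\frac{4n}{2n+1}} \le |Q_2^n|_\infty^{1} \cdot(\text{dimension factor})\ \text{?}
\]
— so first I need to be careful: the cleaner route is $|Q_2^n|_{\frac{4n}{2n+1}}\ge |Q_2^n|_\infty = \max_k\binom{n}{k}\ge \frac{2^n}{n+1}$, hence $D_{\mathbb{R},2n}(2)\ge \frac{2^n}{n+1}$, and taking $2n$-th roots and letting $n\to\infty$ would give $\limsup_m D_{\mathbb{R},m}^{1/m}(2)\ge 2^{1/2}=\sqrt{2}$. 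That already beats $\sqrt[8]{27}\approx 1.5098$ since $\sqrt 2\approx 1.414$ — wait, it does \emph{not}; $\sqrt 2 < \sqrt[8]{27}$. So the naive bound in $N=2$ is not enough, and the proof must do something sharper for general $N\ge 2$.

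The correct approach is to interpolate between the two regimes: use $Q_{2^k}$ (which lives in $2^k$ variables) for a \emph{fixed} $k$ with $2^k\le N$, raised to the power $n$. By equation \eqref{eq_m} in the proof of Theorem \ref{the:Q_2k} we have $|Q_{2^k}^n|_\infty \ge \left(\frac{2^n}{n+1}\right)^{2^k-1}$, while $\|Q_{2^k}^n\| = 1$; and $Q_{2^k}^n$ is an $(n2^k)$-homogeneous polynomial in $2^k\le N$ variables. Since $|P|_{\frac{2m}{m+1}}\ge |P|_\infty$ for any $m$-homogeneous $P$, this yields
\[
D_{\mathbb{R},\,n2^k}(N)\ \ge\ D_{\mathbb{R},\,n2^k}(2^k)\ \ge\ \left(\frac{2^n}{n+1}\right)^{2^k-1}.
\]
Taking $(n2^k)$-th roots: $D_{\mathbb{R},n2^k}^{1/(n2^k)}(N)\ge \left(\frac{2^n}{n+1}\right)^{(2^k-1)/(n2^k)} = 2^{(2^k-1)/2^k}\cdot (n+1)^{-(2^k-1)/(n2^k)}$, and the second factor tends to $1$ as $n\to\infty$, so $\limsup_m D_{\mathbb{R},m}^{1/m}(N)\ge 2^{1-2^{-k}}$ for every $k$ with $2^k\le N$. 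Choosing the largest such $k$, namely $k=\lfloor\log_2 N\rfloor$, gives the bound $2^{1-2^{-\lfloor\log_2 N\rfloor}}$; for $N=2$ this is $k=1$, giving $\sqrt 2$, which is again too weak. So the stated constant $\sqrt[8]{27}$ must come from a \emph{different, more efficient quadratic block} in $2$ variables than $x_1^2-x_2^2$ — presumably the polynomial $R_2(x_1,x_2)=x_1^2-x_2^2+x_1x_2$ from Theorem \ref{the:1.177}, which has $\|R_2\|=5/4$ and coefficients $(1,-1,1)$ with $\ell_{4/3}$-norm $3^{3/4}$, giving $D_{\mathbb{R},2n}(2)\ge$ something larger; combining $R_2$-type blocks ($2$ variables each) with the squaring recursion to produce a polynomial in exactly $N$ variables is the real content.

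Concretely, I would set $t=\lfloor N/2\rfloor$ (so $2t\le N$), take the $2t$-homogeneous polynomial $R_{2t}(x_1,\dots,x_{2t}) = \prod_{j=1}^{t}(x_{2j-1}^2 - x_{2j}^2 + x_{2j-1}x_{2j})$ with $\|R_{2t}\| = (5/4)^t$, raise it to a power, and then apply the BH inequality together with the estimate $|\cdot|_{4n t/(2nt+1)}\ge|\cdot|_{2}$ (or $\ge|\cdot|_\infty$) on the coefficient vector, whose $\ell_2$-norm is $(\sqrt 3)^{nt}$ since each factor contributes three coefficients of modulus $1$. This gives $D_{\mathbb{R},2nt}(N)\ge (\sqrt 3)^{nt}/(5/4)^{nt}\cdot(\text{subexponential correction})$, and $2n t$-th roots produce $\limsup \ge (3^{1/4}/(5/4)^{1/2})^{?}$... — and here I must track exponents carefully, because for $N=2$ (so $t=1$) the clean bound is $\limsup_m D_{\mathbb{R},m}^{1/m}(2)\ge \big(3^{1/4}\cdot (4/5)^{1/2}\big)$ which is about $1.177$, still below $1.5098$. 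The gap between $1.177$ and $1.5098 = 27^{1/8} = 3^{3/8}$ signals that the right construction raises a \emph{variable-indexed} block to a power so that the $\ell_\infty$-to-$\ell_p$ passage is replaced by the sharp combinatorial count $\max_k\binom{n}{k}$, i.e. one must combine the multiplicative structure of $R_{2t}$ with the additive ``squaring'' structure of $Q$ as in \eqref{hip_ind}. The main obstacle, then, is designing the right hybrid polynomial in exactly $N$ variables whose norm grows like $((5/4)^{1/2})^{\deg}$ while its largest coefficient (not just its $\ell_p$-norm) grows like $(3^{1/4})^{\deg}\cdot 2^{c\cdot\deg}$ for the appropriate constant $c$ coming from the $\binom{n}{k}$ factor — getting this bookkeeping exactly right to land on $3^{3/8}$ is the crux; once the polynomial and its norm/coefficient estimates are in hand, the $\limsup$ computation is a routine $m$-th root limit as in Theorem \ref{the:Q_2k}.
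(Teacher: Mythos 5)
There is a genuine gap: your proposal never produces a construction that reaches the constant $\sqrt[8]{27}$. You correctly check that the three natural candidates fail for $N=2$ — the powers $(x_1^2-x_2^2)^n$ give only $\sqrt{2}\approx 1.414$, the recursion $Q_{2^k}$ of Theorem \ref{the:Q_2k} is useless once the number of variables is capped at $2$ (again $\sqrt 2$), and powers of the block $x^2-y^2+xy$ from Theorem \ref{the:1.177} give only about $1.177$ — but the proof then stops at the admission that ``designing the right hybrid polynomial \ldots is the crux.'' That crux is exactly what is missing, and the hybrid you guess at (combining the multiplicative $R_{2t}$ blocks with the additive squaring structure of \eqref{hip_ind}) is not how the bound is obtained.

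The paper's idea is simpler and of a different nature: keep the coefficients of $(x^2-y^2)^n$ but shrink the sup norm by multiplying with a monomial. Concretely, take $P_4(x,y)=xy(x^2-y^2)=x^3y-xy^3$, whose norm on the square is $\Vert P_4\Vert=\tfrac{2\sqrt3}{9}<1$ (attained at $\pm(\pm\tfrac{1}{\sqrt3},1)$ and $\pm(1,\pm\tfrac{1}{\sqrt3})$). Then $P_4^n=x^ny^n(x^2-y^2)^n$ is $4n$-homogeneous in two variables, its coefficients are $\pm\binom{n}{k}$, and $\Vert P_4^n\Vert=\bigl(\tfrac{2\sqrt3}{9}\bigr)^n$. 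Using $|\cdot|_{\frac{8n}{4n+1}}\geq|\cdot|_2$, the identity $\sum_{k=0}^n\binom{n}{k}^2=\binom{2n}{n}$ and Stirling's formula, one gets
\[
D_{\mathbb{R},4n}(2)\ \geq\ \frac{\sqrt{\binom{2n}{n}}}{\bigl(\tfrac{2\sqrt3}{9}\bigr)^{n}}\ \sim\ \sqrt[4]{\tfrac{4}{m\pi}}\,\bigl(\sqrt[8]{27}\bigr)^{m}\qquad(m=4n),
\]
which yields $\limsup_m D_{\mathbb{R},m}^{1/m}(N)\geq\limsup_m D_{\mathbb{R},m}^{1/m}(2)\geq\sqrt[8]{27}$ for every $N\geq2$. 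The gain over your $\sqrt2$ comes precisely from the trade-off you did not exploit: the factor $x^ny^n$ leaves the coefficient vector (hence its $\ell_2$- or $\ell_\infty$-size, about $2^n$) unchanged while cutting the norm from $1$ to $(2\sqrt3/9)^n$, at the price of doubling the degree; per unit of degree this gives $3^{3/8}=\sqrt[8]{27}$ instead of $2^{1/2}$. Without this (or an equivalent) explicit polynomial and its norm computation, the claimed bound is not established.
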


\begin{proof}
If suffices to set $N=2$ and prove that, for $m=4n$,%
\[
D_{{\mathbb{R}},4n}\left(  2\right)  \geq\sqrt[4]{\frac{4}{m\pi}}\left(
\sqrt[8]{27}\right)  ^{m}.
\]
Consider the $4$-homogeneous polynomial given by
\[
P_{4}(x,y)=x^{3}y-xy^{3}=xy(x^{2}-y^{2}).
\]
A straightforward calculation shows that $P_{4}$ attains its norm at $\pm
(\pm\frac{1}{\sqrt{3}},1)$ and $\pm(1,\pm\frac{1}{\sqrt{3}})$ and that $\Vert
P_{4}\Vert=\frac{2\sqrt{3}}{9}$. On the other hand $\Vert P_{4}^{n}%
\Vert=\left(  \frac{2\sqrt{3}}{9}\right)  ^{n}$ and
\[
P_{4}(x,y)^{n}=x^{n}y^{n}\sum_{k=0}^{n}{\binom{{n}}{{k}}}(-1)^{k}%
x^{2k}y^{2(n-k)}.
\]
Hence, if $\mathbf{a}$ is the vector of the coefficients of $P_{4}$, using the
fact that $|\cdot|_{\frac{8n}{4n+1}}\geq|\cdot|_{2}$ (notice that here
$|\cdot|_{2}$ is the Euclidian norm), we have
\begin{align}
D_{{\mathbb{R}},4n}(2)  &  \geq\frac{|\mathbf{a}|_{\frac{8n}{4n+1}}}{\Vert
P_{4}\Vert^{n}}=\frac{\left[  \sum_{k=0}^{n}{\binom{{n}}{{k}}}^{\frac
{8n}{4n+1}}\right]  ^{\frac{4n+1}{8n}}}{\left(  {\frac{2\sqrt{3}}{9}}\right)
^{n}}\nonumber\\
&  \geq\frac{\left[  \sum_{k=0}^{n}{\binom{{n}}{{k}}}^{2}\right]  ^{\frac
{1}{2}}}{\left(  {\frac{2\sqrt{3}}{9}}\right)  ^{n}}=\frac{\sqrt{\binom{{2n}%
}{{n}}}}{\left(  {\frac{2\sqrt{3}}{9}}\right)  ^{n}}=\frac{\sqrt{(2n)!}%
}{\left(  {\frac{2\sqrt{3}}{9}}\right)  ^{n}n!}. \label{ali:lower_estimate}%
\end{align}
Above we have used the well known formula
\[
\sum_{k=0}^{n}{\binom{{n}}{{k}}}^{2}={\binom{{2n}}{{n}}}.
\]
Using Stirling's approximation formula
\[
n!\sim\sqrt{2\pi n}\left(  \frac{n}{e}\right)  ^{n}%
\]
in \eqref{ali:lower_estimate} we have, for $m=4n$,
\[
D_{{\mathbb{R}},m}(2)=D_{{\mathbb{R}},4n}(2)\geq\frac{\sqrt{(2n)!}}{\left(
{\frac{2\sqrt{3}}{9}}\right)  ^{n}n!}\sim\frac{\sqrt{2\sqrt{n\pi}\left(
\frac{2n}{e}\right)  ^{2n}}}{\left(  {\frac{2\sqrt{3}}{9}}\right)  ^{n}%
\sqrt{2\pi n}\left(  \frac{n}{e}\right)  ^{n}}=\sqrt[4]{\frac{4}{m\pi}}\left(
\sqrt[8]{27}\right)  ^{m}.
\]

\end{proof}

%%%%%%%%%%%%%%%%%%%

\begin{bibdiv}
\begin{biblist}

\bib{BAYART}{article}{
author={Bayart, F.},
title={Hardy spaces of Dirichlet series and their composition operators},
journal={Monatsh. Math.},
volume={136},
date={2002},
number={},
pages={203--236},
}

\bib{bps}{article}{
author={Bayart, F.},
author={Pellegrino, D.},
author={Seoane-Sep\'{u}lveda, J. B.},
title={The Bohr radius of the $n$-dimensional polydisk is equivalent to $\sqrt{(\log n)/n}$},
journal={arXiv:1310.2834 [math.FA]},
}

\bib{Boas}{article}{
author={Boas, Harold P.},
title={The football player and the infinite series},
journal={Notices Amer. Math. Soc.},
volume={44},
date={1997},
number={11},
pages={1430--1435},
}

\bib{boas}{article}{
author={Boas, Harold P.},
author={Khavinson, Dmitry},
title={Bohr's power series theorem in several variables},
journal={Proc. Amer. Math. Soc.},
volume={125},
date={1997},
number={10},
pages={2975--2979},
}

\bib{bh}{article}{
author={Bohnenblust, H. F.},
author={Hille, Einar},
title={On the absolute convergence of Dirichlet series},
journal={Ann. of Math. (2)},
volume={32},
date={1931},
number={3},
pages={600--622},
}

\bib{annals2011}{article}{
   author={Defant, Andreas},
   author={Frerick, Leonhard},
   author={Ortega-Cerd\`{a}, Joaquim},
   author={Ouna{\"{\i}}es, Myriam},
   author={Seip, Kristian},
   title={The Bohnenblust-Hille inequality for homogeneous polynomials is hypercontractive},
   journal={Ann. of Math. (2)},
   volume={174},
   date={2011},
   number={1},
   pages={485--497},
}

\bib{DJT}{book}{
author={Diestel, Joe},
author={Jarchow, Hans},
author={Tonge, Andrew},
title={Absolutely summing operators},
series={Cambridge Studies in Advanced Mathematics},
volume={43},
publisher={Cambridge University Press},
place={Cambridge},
date={1995},
pages={xvi+474},
}

\bib{DD}{article}{
author={Diniz, D.},
author={Mu\~{n}oz-Fern\'{a}ndez, G. A.},
author={Pellegrino, D.},
author={Seoane-Sep\'{u}lveda, Juan B.},
title={The asymptotic growth of the constants in the Bohnenblust-Hille inequality is optimal},
journal={J. Funct. Anal.},
volume={263},
date={2012},
pages={415--428},
}

\bib{diniz2}{article}{
author={Diniz, D.},
author={Mu\~{n}oz-Fern\'{a}ndez, G. A.},
author={Pellegrino, D.},
author={Seoane-Sep\'{u}lveda, J. B.},
title={Lower bounds for the constants in the Bohnenblust-Hille inequality: the case of real scalars},
journal={Proc. Amer. Math. Soc.},
volume={142},
date={2014},
pages={575--580},
}

\bib{er}{article}{
author={Erd\"{o}s, P.},
title={Some remarks on polynomials},
journal={Bull. Amer. Math. Soc.},
volume={53},
date={1947},
pages={1169--1176},
}

\bib{monta}{article}{
author={Montanaro, A.},
title={Some applications of hypercontractive inequalities in quantum information theory},
journal={J. Math. Physics},
volume={53},
date={2012},
}

\bib{Nu3}{article}{
author={Nu\~{n}ez-Alarc\'{o}n, D.},
title={A note on the polynomial Bohnenblust-Hille inequality},
journal={J. Math. Anal. Appl.},
volume={407},
date={2013},
number={1},
pages={179--181},
}

\bib{psseo}{article}{
author={Pellegrino, Daniel},
author={Seoane-Sep\'{u}lveda, Juan B.},
title={New upper bounds for the constants in the Bohnenblust-Hille inequality},
journal={J. Math. Anal. Appl.},
volume={386},
date={2012},
number={1},
pages={300--307},
}

\bib{Que}{article}{
author={Queff\'{e}lec, H.},
title={H. Bohr's vision of ordinary Dirichlet series: old and new results},
journal={J. Anal.},
volume={3},
date={1995},
pages={43--60},
}

\bib{Visser}{article}{
author={Visser, C.},
title={A generalization of Tchebychef's inequality to polynomials in more
than one variable},
journal={Nederl. Akad. Wetensch., Proc.},
volume={49},
date={1946},
pages={455--456 = Indagationes Math. 8, 310--311 (1946)},
}

\end{biblist}
\end{bibdiv}

\end{document}